\def\change#1{{\color{black}#1}}
\theoremstyle{definition}
\newcommand{\norm}[1]{\left\lVert#1\right\rVert}
\newtheorem{theorem}{Теорема}
\newtheorem{lemma}{Лемма}
\newtheorem{corollary}{Следствие}
\newtheorem{definition}{Определение}
\newtheorem{remark}{Замечание}
\newcommand\myeq{\mathrel{\stackrel{\makebox[0pt]{\mbox{\normalfont\tiny def}}}{=}}}
\newcommand{\leqarg}[1]{\ensuremath{\stackrel{\text{#1}}{\leq}}}
\newcommand{\eqarg}[1]{\ensuremath{\stackrel{\text{#1}}{=}}}
\newcommand\uprule{\rule{0mm}{1.9ex}}
\DeclareMathOperator*{\argmin}{arg\,min}
\DeclareMathOperator*{\Argmin}{Arg\,min}
\newcommand*\circled[1]{\tikz[baseline=(char.base)]{
		\node[shape=circle,draw,inner sep=2pt] (char) {#1};}}
\begin{document}

\textbf{УДК} 519.85

\begin{center}
\textbf{Быстрый градиентный спуск для задач выпуклой минимизации с оракулом, выдающим $(\delta,L)$-модель функции в запрошенной точке\footnote{Исследование А.И. Тюрина финансировалось в рамках государственной поддержки ведущих университетов Российской Федерации «5-100». Работа А.В. Гасникова по основному материалу статьи была поддержана грантом РФФИ 18-31-20005 мол\_а\_вед. Работа А.В. Гасникова в Приложении была поддержана грантом РНФ 17-11-01027.}}

{\bf
\copyright\,2019 г.\,\,
А. В. Гасников$^{*, **, ***}$,
А. И. Тюрин$^*$
}

(*101000 Москва, ул. Мясницкая, 20, НИУ ВШЭ;

**141700 Долгопрудный, М.о., Институтский пер., 9, НИУ МФТИ;

***127051 Москва, Бол. Каретный пер., 19, стр. 1, Ин-т пробл. передачи информац. РАН;)
 
e-mail: atyurin@hse.ru

Поступила в редакцию: 08.11.2017 г.
\end{center}

\renewcommand{\abstractname}{\vspace{-\baselineskip}}
\begin{abstract}
Предлагается новая концепция $(\delta,L)$-модели функции, которая обобщает концепцию $(\delta,L)$-оракула Деволдера--Глинера--Нестерова. В рамках этой концепции строятся градиентный спуск, быстрый градиентный спуск и показывается, что многие известные ранее конструкции методов (композитные методы, методы уровней, метод условных градиентов, проксимальные методы) являются частными случаями предложенных в данной работе методов. Библ. 34.
\end{abstract}

\textbf{Ключевые слова:} градиентный спуск, быстрый градиентный спуск, модель функции, универсальный метод, метод условного градиента, композитная оптимизация.

\section{Введение}
Градиентный спуск и быстрый градиентный спуск являются, пожалуй, одними из самых популярных сейчас численными методами оптимизации. В основу обоих методов положена идея аппроксимации функции в исходной точке (текущем положении метода) мажорирующим ее параболоидом вращения и выбора точки минимума параболоида вращения в качестве нового положения метода. Таким образом, реализуется принцип ``разделяй и властвуй'', исходно сложная задача разбивается на совокупность более простых задач. Методы второго порядка, квазиньютоновские методы, методы с кубической регуляризацией, чебышевские методы, композитные методы и методы уровней наводят на мысль, что совсем не обязательно использовать в описанном выше подходе именно параболоиды вращения. Можно использовать более сложные функции, которые строят более точную локальную модель функции в рассматриваемой точке, что приводит в итоге к более быстрой сходимости метода. Здесь можно выделить два направления: одно направление связано с использованием старших производных в модели функции, второе направление связано с занесением в модель части постановки задачи, например, в случае когда функция представляется в виде суммы двух функций: одну из них можно заменять параболоидом вращения в модели, а вторую оставить как есть. Второе направление является более новым и на данный момент по сути и не является направлением. Насколько нам известно, до настоящего момента имеющиеся здесь разрозненные результаты не было попыток связать. В данной работе предпринимается такая попытка. Стоит отметить, что в случае с параболоидом вращения решение вспомогательной задачи, как правило не представляет большого труда и часто может быть сделано по явным формулам, таким образом, без ошибок. Совсем другое дело, когда мы рассматриваем второй подход, в котором типична обратная ситуация -- вспомогательную задачу можно решить только приближенно. В этой связи в данной работе прорабатывается возможность неточного решения вспомогательной задачи.

Общая задача оптимизации, когда имеется только информация о липшицевости градиента или о липшицевости функции, хорошо изучена и для нее имеются нижние и верхние оценки \cite{nesterov2010introductory}, \cite{nemirovskiy1979slognost}. В последнее время стала популярна структурная оптимизация, когда мы имеем априорную информацию о структуре задачи. Дополнительная информация о задаче позволяет находить новые методы и улучшать верхние оценки. В частности, композитная постановка задачи, когда заданная функция имеет вид суммы гладкой и негладкой функции \cite{nesterov2013gradient}, может быть решена со скоростью быстрого градиентного метода, используя дополнительную информацию о негладком слагаемом в сумме.   Помимо этого, часто можно находить минимум негладкой функций со скоростью быстрого градиентного метода, несмотря на то, что в общем случае это невозможно для негладких задач. Как пример, можем рассмотреть минимаксную задачу, которая является негладкой, но для нее предложен быстрый градиентный метод \cite{nesterov2010introductory}. Основной целью данной работы является попытка объединить различные подходы в один, таким образом, предложить метод, который бы смог в себя включать ранее предложенные концепции. В разд. 4 мы привели большое количество примеров задач оптимизации, которые могут быть решены нашим методом. Для этого вводится понятие $(\delta,L)$-модели, которое по своей сути является обобщением определения липшицевости градиента или концепции $(\delta,L)$-оракула Деволдера--Глинера--Нестерова.\\

Статья построена следующим образом. В разд. 2 на базе концепции $(\delta,L)$-оракула \cite{devolder2013first} приводится оригинальная концепция $(\delta,L)$-модели функции, при этом дополнительно допускается возможность неточного решения вспомогательной задачи \cite{ben-tal2015lectures}. Также в этом разделе приводится обобщение метода градиентного спуска на случай работы с введенной моделью. В разд. 3 результаты разд. 2 с градиентного спуска переносятся на быстрый градиентный спуск. В разд. 4 содержатся приложения предложенных в разд. 2, 3 методов в концепции $(\delta,L)$-модели к различным постановкам задач. Именно в этом разделе показывается, что предложенная концепция и методы, действительно, позволяют собрать имеющиеся в данном направлении результаты.
В Приложении приводится обоснование того, что выбранная, следуя А.С. Немировскому, концепция точности, с которой решается вспомогательная задача вполне практична и для ограниченных гладких постановок задач сводится к обычной концепции сходимости по функции (стоит отметить, что есть и другие концепции \cite{taylor2015exact}, \cite{ochs2017non}, \cite{mairal2013optimization}).

\section{Градиентный метод с оракулом, использующим $(\delta, L)$-модель}
\label{gradMethod}
Опишем сначала общую постановку задачи выпуклой оптимизации \cite{nesterov2010introductory}. 
Пусть определена функция $F(x): Q \longrightarrow \mathds{R}$ и дана произвольная норма $\norm{}$ в $\mathds{R}^n$. Сопряженная норма определяется следующим образом:
\begin{gather}\norm{\lambda}_* = \max\limits_{\norm{\nu} \leq 1;\nu \in \mathds{R}^n}\langle \lambda,\nu\rangle\,\,\,\,\forall \lambda \in \mathds{R}^n.\end{gather}
Будем полагать, что:
\begin{enumerate}
	\item $Q \subseteq \mathds{R}^n$, выпуклое, замкнутое.
	\item $F(x)$ -- непрерывная и выпуклая функция на $Q$.
	\item $F(x)$ ограничена снизу на $Q$ и достигает своего минимума в некоторой точке (необязательно единственной) $x_* \in Q$.
\end{enumerate}

Рассмотрим следующую задачу оптимизации:
\begin{align}
\label{mainTask3}
F(x) \rightarrow \min_{x \in Q}.
\end{align}

Введем два понятия: прокс-функция и дивергенция Брэгмана \cite{gupta2008bregman}.
\begin{definition}
$d(x):Q \rightarrow \mathds{R}$ называется прокс-функцией, если $d(x)$ непрерывно дифференцируемая на $\textnormal{int }Q$ и $d(x)$ является 1-сильно выпуклой относительно нормы $\norm{}$ на $\textnormal{int }Q$.
\end{definition}
\begin{definition}
Дивергенцией Брэгмана называется 
\begin{align}
V(x,y) \myeq d(x) - d(y) - \langle\nabla d(y), x - y\rangle,
\end{align}
где $d(x)$ -- произвольная прокс-функция.
\end{definition}
Легко показать \cite{ben-tal2015lectures}, что \begin{gather*}V(x,y) \geq \frac{1}{2}\norm{x - y}^2.\end{gather*}

Далее мы введем определение $(\delta, L)$-модели функции \cite{gasnikov2017universal}, которая является прямым обобщением $(\delta, L)$-оракула \cite{devolder2013first}, \cite{devolder2014first}, \cite{devolder2013exactness}.
\begin{definition}
\leavevmode
\label{gen_delta_L_oracle}
Будем говорить, что имеем $(\delta, L)$-модель функции $F(x)$ в точке $y$, и обозначать эту модель $(F_{\delta}(y), \psi_{\delta}(x,y))$, если для любого $x \in Q$ справедливо неравенство
\begin{gather}
\label{exitLDLOrig}
0 \leq F(x) - F_{\delta}(y) - \psi_{\delta}(x,y) \leq \frac{L}{2}\norm{x - y}^2 + \delta,
\end{gather}
причем
\begin{gather}
\label{exitLDLOrig_eqaul}
\psi_{\delta}(x,x) = 0 \,\,\,\, \forall x \in Q
\end{gather}
и $\psi_{\delta}(x,y)$ -- выпуклая функция по $x$ для $\forall y \in Q$.
\end{definition}
Будем считать, что для $F(x)$ имеются такие $\delta$ и $L$, что существует $(\delta, L)$-модель в любой точке $x \in Q$. Соответствующие примеры представлены в разд. \ref{sledviya}.

\begin{corollary}

Возьмем $x = y$ в (\ref{exitLDLOrig}) и воспользуемся (\ref{exitLDLOrig_eqaul}), тогда
\begin{gather}
\label{exitLDLOrig2}
F_{\delta}(y) \leq F(y) \leq F_{\delta}(y) + \delta \,\,\,\,\forall y \in Q.
\end{gather}
\end{corollary}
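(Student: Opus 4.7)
План состоит в том, чтобы получить утверждаемое двустороннее неравенство прямой подстановкой $x = y$ в неравенство (\ref{exitLDLOrig}). По определению $(\delta,L)$-модели соотношение $0 \leq F(x) - F_{\delta}(y) - \psi_{\delta}(x,y) \leq \tfrac{L}{2}\norm{x-y}^2 + \delta$ справедливо для любых $x, y \in Q$, а значит, в частности, оно выполнено при $x = y$. После такой подстановки квадратичное слагаемое $\tfrac{L}{2}\norm{y-y}^2$ обращается в нуль по аксиомам нормы, а величина $\psi_{\delta}(y,y)$ обращается в нуль в силу свойства (\ref{exitLDLOrig_eqaul}) из определения \ref{gen_delta_L_oracle}.

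После этого в средней части неравенства остаётся выражение $F(y) - F_{\delta}(y)$, а в правой -- просто $\delta$, откуда $0 \leq F(y) - F_{\delta}(y) \leq \delta$. Перенося $F_{\delta}(y)$, получаем искомую цепочку $F_{\delta}(y) \leq F(y) \leq F_{\delta}(y) + \delta$; поскольку точка $y \in Q$ выбиралась произвольно, неравенство выполнено для всех $y \in Q$. Существенных трудностей здесь нет: утверждение является непосредственной одношаговой специализацией определения \ref{gen_delta_L_oracle} и опирается лишь на тождество $\norm{y-y}=0$ и встроенную в определение $(\delta,L)$-модели нормировку $\psi_{\delta}(y,y)=0$.
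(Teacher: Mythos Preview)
Ваше доказательство корректно и в точности воспроизводит рассуждение статьи: там само утверждение следствия уже содержит всё доказательство в формулировке --- подставить $x=y$ в (\ref{exitLDLOrig}) и воспользоваться (\ref{exitLDLOrig_eqaul}). Вы лишь развернули этот одношаговый аргумент более подробно.
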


Рассмотрим концепцию неточного решения задачи, представленную в \cite{ben-tal2015lectures}.
\begin{definition}
\label{solNemirovskiy}
Пусть имеется задача
\begin{gather*}
\psi(x) \rightarrow \min_{x \in Q},
\end{gather*}
где $\psi(x)$ -- выпуклая, тогда $\Argmin_{x \in Q}^{\widetilde{\delta}}\psi(x)$ -- множество таких $\widetilde{x}$, что
\begin{gather*}
\exists h \in \partial\psi(\widetilde{x}), \,\,\, \langle h, x - \widetilde{x}  \rangle \geq -\widetilde{\delta} \,\,\,\, \forall x \in Q.
\end{gather*}
Произвольный элемент из $\Argmin_{x \in Q}^{\widetilde{\delta}}\psi(x)$ будем обозначать через $\argmin_{x \in Q}^{\widetilde{\delta}}\psi(x)$.

\end{definition}

Рассмотрим простое следствие. Пусть $\widetilde{x} \in \Argmin_{x \in Q}^{\widetilde{\delta}}\psi(x)$, тогда из выпуклости верно: $\psi(x) \geq \psi(\widetilde{x}) + \langle h, x - \widetilde{x}  \rangle \geq \psi(\widetilde{x})-\widetilde{\delta}$, где $h \in \partial\psi(\widetilde{x})$. Возьмем $x = x_*$, тогда $\psi(\widetilde{x}) - \psi(x_*) \leq \widetilde{\delta}$. Таким образом, из $\widetilde{x} \in \Argmin_{x \in Q}^{\widetilde{\delta}}\psi(x)$ следует, что $\widetilde{x}$ является $\widetilde{\delta}$-решением по функции. Обратное вообще говоря неверно, в доказательстве далее ключевым образом будет использоваться именно условие на $\widetilde{\delta}$-решение из определения \ref{solNemirovskiy}, которое является более строгим.

Рассмотрим обобщение алгоритма градиетного спуска для задачи (\ref{mainTask3}) (см. \cite{gasnikov2017universal}). В данном алгоритме будем предполагать, что дана начальная точка $x_0$, $N$ -- количество шагов метода, $L_0$ -- константа, которая имеет смысл предположительной "локальной"\, константы Липшица градиента в точке $x_0$. Также на вход алгоритму подаются последовательности $\{\delta_k\}_{k=0}^{N-1}$, $\{\widetilde{\delta}_k\}_{k=0}^{N-1}$. Должно быть выполнено, что для любого $k$ существует $(\delta_k, L)$-модель для $F(x)$ в любой точке $x \in Q$. Последовательность $\{\widetilde{\delta}_k\}_{k=0}^{N-1}$ -- точности решения из определения \ref{solNemirovskiy}, причем в зависимости от задачи они могут быть равными нулю, иметь постоянное значение или меняться от итерации к итерации.

Стоит отметить, что далее нигде явно в методе не будет использоваться константа $L$. Будем считать, что $L_0 \leq L$, иначе во всех оценках далее следует полагать $L := \max\left(L_0, L\right)$. 

Представим алгоритм градиентного метода с оракулом, использующим $(\delta, L)$-модель.\\

\textbf{Алгоритм:}

\textbf{Дано:} $x_0$ -- начальная точка, $N$ -- количество шагов, $\{\delta_k\}_{k=0}^{N-1}$, $\{\widetilde{\delta}_k\}_{k=0}^{N-1}$ -- последовательности и $L_0 > 0$.

\textbf{0 - шаг:}
\begin{gather*}
L_1 := \frac{L_0}{2}.
\end{gather*}

\textbf{$\boldsymbol{k+1}$ - шаг:}
\begin{gather*}
\alpha_{k+1} := \frac{1}{L_{k+1}},
\end{gather*}
\begin{equation}
\begin{gathered}
\phi_{k+1}(x) = V(x, x_k) + \alpha_{k+1}\psi_{\delta_k}(x, x_{k}),\\
x_{k+1} := {\argmin_{x \in Q}}^{\widetilde{\delta}_k}\phi_{k+1}(x). \label{equmir2DL_G}
\end{gathered}
\end{equation}
Если выполнено условие
\begin{equation}
\begin{gathered}
F_{\delta_k}(x_{k+1}) \leq F_{\delta_k}(x_{k}) + \psi_{\delta_k}(x_{k+1}, x_{k}) +\\  + \frac{L_{k+1}}{2}\norm{x_{k+1} - x_{k}}^2 + \delta_k,
\label{exitLDL_G}
\end{gathered}
\end{equation}
то
\begin{gather*}
L_{k+2} := \frac{L_{k+1}}{2}
\end{gather*}
и перейти к следующему шагу, иначе
\begin{gather*}
L_{k+1} := 2L_{k+1}
\end{gather*}
и повторить текущий шаг.

\begin{remark}
\leavevmode
\label{remark_maxmin}
Для всех $k \geq 0$ выполнено
\begin{equation*}
	L_{k} \leq 2L.
\end{equation*}
Для $k = 0$ верно из того, что $L_0 \leq L$. Для $k \geq 1$ это следует из того, что мы выйдем из внутреннего цикла, где подбирается $L_k$, ранее, чем $L_{k}$ станет больше $2L$. Выход из цикла гарантируется тем, что по условию существует $(\delta_k, L)$-модель для $F(x)$ в любой точке $x \in Q$.
\end{remark}

Докажем важную лемму, которая нам пригодится далее.

\begin{lemma}
	Пусть $\psi(x)$ выпуклая функция и 
	\begin{gather*}
	y = {\argmin_{x \in Q}}^{\widetilde{\delta}}\{\psi(x) + V(x,z)\}.
	\end{gather*}
	Тогда выполнено неравенство
	\begin{equation*}
	\psi(x) + V(x,z) \geq \psi(y) + V(y,z) + V(x,y) - \widetilde{\delta} \,\,\,\, \forall x \in Q.
	\end{equation*}
	\label{lemma_maxmin_2}
\end{lemma}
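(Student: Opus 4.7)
Мой план следующий.

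Во-первых, я введу обозначение $\Phi(x) := \psi(x) + V(x,z)$ и применю к точке $y$ определение \ref{solNemirovskiy}: найдётся $h \in \partial \Phi(y)$ такое, что $\langle h, x - y\rangle \geq -\widetilde{\delta}$ для всех $x \in Q$. Поскольку $V(x,z) = d(x) - d(z) - \langle \nabla d(z), x - z \rangle$ как функция от $x$ гладкая, по правилу суммирования субдифференциалов $h$ представим в виде $h = g + \nabla d(y) - \nabla d(z)$, где $g \in \partial \psi(y)$. Это даёт ключевую оценку
\begin{gather*}
\langle g, x - y\rangle + \langle \nabla d(y) - \nabla d(z), x - y\rangle \geq -\widetilde{\delta} \quad \forall x \in Q.
\end{gather*}

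Во-вторых, я проверю трёхточечное тождество для дивергенции Брэгмана, которое и отвечает за появление в правой части именно $V(x,y)$, а не просто $\tfrac{1}{2}\norm{x-y}^2$. Прямое раскрытие определения даёт
\begin{gather*}
V(x,z) - V(y,z) - V(x,y) = \langle \nabla d(y) - \nabla d(z), x - y\rangle.
\end{gather*}
Это, пожалуй, единственное техническое место, но оно сводится к аккуратной бухгалтерии слагаемых $d(x), d(y), d(z)$ и скалярных произведений в определении $V$.

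В-третьих, из выпуклости $\psi$ имеем $\psi(x) \geq \psi(y) + \langle g, x - y\rangle$. Сложив это неравенство с упомянутым тождеством (перенеся $V(y,z) + V(x,y)$ в правую часть), получаю
\begin{gather*}
\psi(x) + V(x,z) - \psi(y) - V(y,z) - V(x,y) \geq \langle g + \nabla d(y) - \nabla d(z), x - y\rangle \geq -\widetilde{\delta},
\end{gather*}
что в точности и есть требуемое утверждение. Никаких серьёзных препятствий не ожидаю: идея полностью стандартная (тройственная формула для дивергенции Брэгмана), а единственная модификация по сравнению с классикой — замена нулевой правой части на $-\widetilde{\delta}$, что автоматически получается из определения приближённого минимума по Немировскому.
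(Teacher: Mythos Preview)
Ваше рассуждение корректно и по существу совпадает с доказательством в статье: там тоже применяется определение~\ref{solNemirovskiy} к функции $\psi(\cdot)+V(\cdot,z)$ (с неявным использованием правила суммы для субдифференциалов, которое вы проговариваете явно), затем выпуклость $\psi$ и то же самое трёхточечное тождество $\langle \nabla d(y)-\nabla d(z),\,x-y\rangle = V(x,z)-V(y,z)-V(x,y)$, после чего утверждение получается сложением.
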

\begin{proof}\renewcommand{\qedsymbol}{}
	По определению \ref{solNemirovskiy}:
	\begin{gather*}
		\exists g \in \partial\psi(y), \,\,\, \langle g + \nabla_y V(y, z), x - y  \rangle \geq -\widetilde{\delta} \,\,\,\, \forall x \in Q.
	\end{gather*}
	Тогда неравенство
	\begin{gather*}
		\psi(x) - \psi(y) \geq \langle g, x - y  \rangle \geq \langle \nabla_y V(y, z), y - x  \rangle - \widetilde{\delta}
	\end{gather*}
и равенство
	\begin{gather*}
	\langle \nabla_y V(y, z), y - x  \rangle = \langle \nabla d(y) - \nabla d(z), y - x  \rangle = d(y) - d(z) - \langle \nabla d(z), y - z  \rangle +\\ + d(x) - d(y) - \langle \nabla d(y), x - y  \rangle - d(x) + d(z) + \langle \nabla d(z), x - z  \rangle = \\=
	V(y,z) + V(x,y) - V(x,z)
	\end{gather*}
завершают доказательство.
	
\end{proof}

\begin{lemma}
\label{lemma_maxmin_3DL_G}
	Для любого $x \in Q$ выполнено неравенство
	\begin{gather*}
		\change{\alpha_{k+1}F(x_{k+1}) - \alpha_{k+1}F(x)\leq V(x, x_k) - V(x, x_{k+1}) + \widetilde{\delta}_k + 2\delta_k\alpha_{k+1}}
	\end{gather*}
\end{lemma}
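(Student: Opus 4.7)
План доказательства таков. Я планирую последовательно задействовать три инструмента: условие выхода из внутреннего цикла (\ref{exitLDL_G}), лемму \ref{lemma_maxmin_2} и оба неравенства из определения $(\delta, L)$-модели (в том числе следствие (\ref{exitLDLOrig2})), а также элементарную оценку $V(x,y) \geq \tfrac{1}{2}\norm{x-y}^2$ для перехода от квадрата нормы к дивергенции Брэгмана.

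Первым делом я домножу (\ref{exitLDL_G}) на $\alpha_{k+1} = 1/L_{k+1}$. Поскольку $\alpha_{k+1}L_{k+1}/2 = 1/2$, квадратичный член превратится в $\tfrac{1}{2}\norm{x_{k+1} - x_k}^2$, и его я заменю сверху на $V(x_{k+1}, x_k)$. В результате получу промежуточное неравенство
\begin{gather*}
\alpha_{k+1} F_{\delta_k}(x_{k+1}) \leq \alpha_{k+1} F_{\delta_k}(x_k) + \alpha_{k+1}\psi_{\delta_k}(x_{k+1}, x_k) + V(x_{k+1}, x_k) + \alpha_{k+1} \delta_k.
\end{gather*}

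Далее я применю лемму \ref{lemma_maxmin_2} к функции $\psi(\cdot) := \alpha_{k+1}\psi_{\delta_k}(\cdot, x_k)$ и точке $z := x_k$: тогда точка $y$ из формулировки леммы совпадает с $x_{k+1}$ в силу (\ref{equmir2DL_G}). Это позволит оценить сумму $\alpha_{k+1}\psi_{\delta_k}(x_{k+1}, x_k) + V(x_{k+1}, x_k)$ сверху через $\alpha_{k+1}\psi_{\delta_k}(x, x_k) + V(x, x_k) - V(x, x_{k+1}) + \widetilde{\delta}_k$ для произвольной точки $x \in Q$ и тем самым перенесёт зависимость от $x_{k+1}$ в правой части на эталонную точку $x$. После подстановки в предыдущее неравенство $V(x_{k+1}, x_k)$ полностью сократится.

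Заключительным шагом будет переход от модели $F_{\delta_k}$ к самой функции $F$. Левое неравенство в (\ref{exitLDLOrig}) даёт $F_{\delta_k}(x_k) + \psi_{\delta_k}(x, x_k) \leq F(x)$, что позволит вытеснить в правой части слагаемые с $F_{\delta_k}(x_k)$ и $\psi_{\delta_k}(x, x_k)$, заменив их на $\alpha_{k+1}F(x)$. Следствие (\ref{exitLDLOrig2}) в виде $F_{\delta_k}(x_{k+1}) \geq F(x_{k+1}) - \delta_k$ добавит ещё одно $\alpha_{k+1}\delta_k$ слева. Итоговый вклад от неточности модели составит ровно $2\alpha_{k+1}\delta_k$: одно $\alpha_{k+1}\delta_k$ приходит из условия выхода, второе — из соотношения между $F$ и $F_{\delta_k}$ в точке $x_{k+1}$. Главной технической тонкостью ожидаю аккуратное соответствие определения \ref{solNemirovskiy} и леммы \ref{lemma_maxmin_2}: важно, что $\widetilde{\delta}_k$-приближённый минимум композиции $V(\cdot, x_k) + \alpha_{k+1}\psi_{\delta_k}(\cdot, x_k)$ из шага алгоритма переносится в оценку леммы \emph{без} дополнительного множителя $\alpha_{k+1}$, так что в итоговом неравенстве $\widetilde{\delta}_k$ действительно стоит без веса. Всё остальное сводится к прямолинейной подстановке.
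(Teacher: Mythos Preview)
Предложенное доказательство корректно и по существу совпадает с доказательством из статьи: те же три ингредиента (условие выхода~(\ref{exitLDL_G}) вместе со следствием~(\ref{exitLDLOrig2}), оценка $V\ge\tfrac12\norm{\cdot}^2$, лемма~\ref{lemma_maxmin_2} с $\psi=\alpha_{k+1}\psi_{\delta_k}(\cdot,x_k)$ и левая часть~(\ref{exitLDLOrig})) применяются в том же порядке. Единственное отличие --- ты сначала домножаешь на $\alpha_{k+1}$ и применяешь~(\ref{exitLDLOrig2}) в конце, тогда как статья сразу переходит к $F(x_{k+1})$ и домножает в самом конце; это чисто косметическая перестановка.
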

\begin{proof}\renewcommand{\qedsymbol}{}
	Рассмотрим цепочку неравенств:
	\begin{gather*}
	F(x_{k+1}) \leqarg{(\ref{exitLDL_G}), (\ref{exitLDLOrig2})} F_{\delta_k}(x_{k}) + \psi_{\delta_k}(x_{k+1}, x_{k})  + \frac{L_{k+1}}{2}\norm{x_{k+1} - x_{k}}^2 + 2\delta_k \leq \\ \leq
	F_{\delta_k}(x_{k}) + \psi_{\delta_k}(x_{k+1}, x_{k})  + \frac{1}{\alpha_{k+1}}V(x_{k+1}, x_{k}) + 2\delta_k
	\leq_{{\tiny \circled{1}}} \\\leq
	 F_{\delta_k}(x_{k}) + \psi_{\delta_k}(x,x_{k})
	 	 + \frac{1}{\alpha_{k+1}}V(x, x_k) - \frac{1}{\alpha_{k+1}}V(x, x_{k+1}) + \frac{\widetilde{\delta}_k}{\alpha_{k+1}} + 2\delta_k \leqarg{(\ref{exitLDLOrig})} \\ \leq
	 \change{F(x) + \frac{1}{\alpha_{k+1}}V(x, x_k) - \frac{1}{\alpha_{k+1}}V(x, x_{k+1}) + \frac{\widetilde{\delta}_k}{\alpha_{k+1}}+ 2\delta_k.}
	\end{gather*}

\change{Неравенство {\small \circled{1}} следует из леммы \ref{lemma_maxmin_2} с 
$\psi(x) = \alpha_{k+1} \psi_{\delta_k}(x, x_{k})$ и левой части (\ref{exitLDLOrig}).}
\end{proof}

\begin{theorem}
	\label{mainTheoremDL_G}
	\change{Пусть $V(x_*, x_0) \leq R^2$, где $x_0$ -- начальная точка, а $x_*$ -- ближайшая точка минимума к точке $x_0$ в смысле дивергенции Брэгмана, и \begin{gather*}\bar{x}_N= \frac{1}{A_N}\sum_{k=0}^{N-1}\alpha_{k+1} x_{k+1}.\end{gather*} Для предложенного алгоритма выполнено следующее неравенство:
	\begin{align*}
	F(\bar{x}_N) - F(x_*) \leq \frac{2LR^2}{N} + \frac{2L}{N}\sum_{k=0}^{N-1}\widetilde{\delta}_k + \frac{2}{A_N}\sum_{k=0}^{N-1}\alpha_{k+1}\delta_k.
	\end{align*}}
\end{theorem}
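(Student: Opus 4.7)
The plan is to combine the per-step inequality of Lemma~\ref{lemma_maxmin_3DL_G} with a telescoping argument, Jensen's inequality, and the $L_{k+1} \le 2L$ bound from Remark~\ref{remark_maxmin}.

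First I would substitute $x = x_*$ into Lemma~\ref{lemma_maxmin_3DL_G}, which gives, for every $k = 0, \ldots, N-1$,
\begin{gather*}
\alpha_{k+1}F(x_{k+1}) - \alpha_{k+1}F(x_*) \leq V(x_*, x_k) - V(x_*, x_{k+1}) + \widetilde{\delta}_k + 2\delta_k\alpha_{k+1}.
\end{gather*}
Summing over $k$ from $0$ to $N-1$, the Bregman divergence terms telescope, and using $V(x_*, x_N) \geq 0$ together with the hypothesis $V(x_*, x_0) \leq R^2$, I obtain
\begin{gather*}
\sum_{k=0}^{N-1}\alpha_{k+1}F(x_{k+1}) - A_N F(x_*) \leq R^2 + \sum_{k=0}^{N-1}\widetilde{\delta}_k + 2\sum_{k=0}^{N-1}\alpha_{k+1}\delta_k,
\end{gather*}
where $A_N = \sum_{k=0}^{N-1}\alpha_{k+1}$.

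Next I would invoke Jensen's inequality: since $F$ is convex and $\bar{x}_N$ is the corresponding convex combination of the $x_{k+1}$, we have $A_N F(\bar{x}_N) \leq \sum_{k=0}^{N-1}\alpha_{k+1} F(x_{k+1})$. Dividing the summed inequality by $A_N$ yields
\begin{gather*}
F(\bar{x}_N) - F(x_*) \leq \frac{R^2}{A_N} + \frac{1}{A_N}\sum_{k=0}^{N-1}\widetilde{\delta}_k + \frac{2}{A_N}\sum_{k=0}^{N-1}\alpha_{k+1}\delta_k.
\end{gather*}

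Finally, I would use Remark~\ref{remark_maxmin}: since $L_{k+1} \leq 2L$ for every $k$, it follows that $\alpha_{k+1} = 1/L_{k+1} \geq 1/(2L)$, hence $A_N \geq N/(2L)$, and therefore $1/A_N \leq 2L/N$. Substituting this bound into the first two terms on the right-hand side gives exactly the claimed estimate. I do not see a serious obstacle here; the only subtlety is that the last (noise) term is kept with $A_N$ in the denominator rather than being simplified via $\alpha_{k+1} \geq 1/(2L)$, which is natural because the $\delta_k$ enter weighted by $\alpha_{k+1}$ in the averaging, and this weighted form is what is needed in the applications of Section~\ref{sledviya}.
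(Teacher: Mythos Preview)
Your proof is correct and follows essentially the same route as the paper: sum Lemma~\ref{lemma_maxmin_3DL_G} at $x=x_*$, telescope the Bregman terms using $V(x_*,x_N)\ge0$ and $V(x_*,x_0)\le R^2$, apply convexity of $F$, divide by $A_N$, and then bound $1/A_N\le 2L/N$ via Remark~\ref{remark_maxmin}. The only cosmetic difference is the order in which you specialize to $x_*$ and apply Jensen versus dividing by $A_N$, which is immaterial.
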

\begin{proof}\renewcommand{\qedsymbol}{}
	
	Суммируя неравенство из леммы \ref{lemma_maxmin_3DL_G} по $k = 0, ..., N - 1$, получим:
	\begin{gather*}
	\change{\sum_{k=0}^{N-1}\alpha_{k+1}F(x_{k+1}) - A_N F(x) \leq V(x, x_0) - V(x, x_N) + \sum_{k=0}^{N-1}\widetilde{\delta}_k + 2\sum_{k=0}^{N-1}\alpha_{k+1}\delta_k.}
	\end{gather*}
	Если взять $x = x_*$, то будем иметь
	\begin{gather*}
	\change{\sum_{k=0}^{N-1}\alpha_{k+1}F(x_{k+1}) - A_N F(x_*) \leq R^2 - V(x_*, x_N) + \sum_{k=0}^{N-1}\widetilde{\delta}_k + 2\sum_{k=0}^{N-1}\alpha_{k+1}\delta_k.}
	\end{gather*}
	Используя условие $V(x_*, x_N) \geq 0$, получим
	\begin{gather*}
	\change{\sum_{k=0}^{N-1}\alpha_{k+1}F(x_{k+1}) - A_N F(x_*) \leq R^2 + \sum_{k=0}^{N-1}\widetilde{\delta}_k + 2\sum_{k=0}^{N-1}\alpha_{k+1}\delta_k.}
	\end{gather*}
	\change{Разделим обе части неравенства на $A_N$:}
	\begin{gather*}
	\change{\frac{1}{A_N}\sum_{k=0}^{N-1}\alpha_{k+1}F(x_{k+1}) - F(x_*) \leq \frac{R^2}{A_N} + \frac{1}{A_N}\sum_{k=0}^{N-1}\widetilde{\delta}_k + \frac{2}{A_N}\sum_{k=0}^{N-1}\alpha_{k+1}\delta_k.}
	\end{gather*}
	Если воспользоваться выпуклостью $F(x)$, то будет верно
	\change{\begin{align*}
		F(\bar{x}_N) - F(x_*) &\leq \frac{R^2}{A_N} + \frac{1}{A_N}\sum_{k=0}^{N-1}\widetilde{\delta}_k + \frac{2}{A_N}\sum_{k=0}^{N-1}\alpha_{k+1}\delta_k\\
		&\leq_{{\tiny \circled{1}}} \frac{2LR^2}{N} + \frac{2L}{N}\sum_{k=0}^{N-1}\widetilde{\delta}_k + \frac{2}{A_N}\sum_{k=0}^{N-1}\alpha_{k+1}\delta_k.
	\end{align*}}
	\change{Неравенство {\small \circled{1}} следует из замечания \ref{remark_maxmin} и $\alpha_{k+1} = 1/L_{k+1}$.}
\end{proof}

\section{Быстрый градиентный метод с оракулом, использующим $(\delta, L)$-модель}
\label{fastGradMethod}
Рассмотрим быстрый вариант алгоритма из разд. \ref{gradMethod}.\\

\textbf{Алгоритм:}

\textbf{Дано:} $x_0$ -- начальная точка, $N$ -- количество шагов, $\{\delta_k\}_{k=0}^{N-1}$, $\{\widetilde{\delta}_k\}_{k=0}^{N-1}$ -- последовательности и $L_0 > 0$.

\textbf{0 - шаг:} $y_0 := x_0,\,
u_0 := x_0,\,
L_1 := \frac{L_0}{2},\,
\alpha_0 := 0,\,
A_0 := \alpha_0.$

\textbf{$\boldsymbol{k+1}$ - шаг:}
\begin{center}
	Найти наибольший корень $\alpha_{k+1} : A_k + \alpha_{k+1} = L_{k+1}\alpha^2_{k+1}$,
\end{center}
\begin{gather*}
A_{k+1} := A_k + \alpha_{k+1},
\end{gather*}
\begin{gather}
y_{k+1} := \frac{\alpha_{k+1}u_k + A_k x_k}{A_{k+1}} \label{eqymir2DL},
\end{gather}
\begin{equation}
\begin{gathered}
\phi_{k+1}(x) = V(x, u_k) + \alpha_{k+1}\psi_{\delta_k}(x, y_{k+1}),\\
u_{k+1} := {\argmin_{x \in Q}}^{\widetilde{\delta}_k}\phi_{k+1}(x), \label{equmir2DL}
\end{gathered}Просуммируем
\end{equation}
\begin{gather}
x_{k+1} := \frac{\alpha_{k+1}u_{k+1} + A_k x_k}{A_{k+1}} \label{eqxmir2DL}.
\end{gather}
Если выполнено условие
\begin{equation}
\begin{gathered}
F_{\delta_k}(x_{k+1}) \leq F_{\delta_k}(y_{k+1}) + \psi_{\delta_k}(x_{k+1}, y_{k+1}) +\\  + \frac{L_{k+1}}{2}\norm{x_{k+1} - y_{k+1}}^2 + \delta_k,
\label{exitLDL}
\end{gathered}
\end{equation}
то
\begin{gather*}
L_{k+2} := \frac{L_{k+1}}{2}
\end{gather*}
и перейти к следующему шагу, иначе
\begin{gather*}
L_{k+1} := 2L_{k+1}
\end{gather*}
и повторить текущий шаг.

\begin{lemma}
	\label{lemma_maxmin_1}
	Пусть для последовательности $\alpha_k$ выполнено
	\begin{align*}
	\alpha_0 = 0,\,\,\,
	A_k = \sum_{i = 0}^{k}\alpha_i,\,\,\,
	A_k = L_{k}\alpha_k^2,\,\,\,
	\end{align*}
	где $L_k \leq 2L$ для любого $k\geq0$ (замечание \ref{remark_maxmin}).
	Тогда для любого $k \geq 1$ верно следующее неравенство
	 \begin{align}
	 \label{lemma_maxmin_1_1}
	 A_k \geq \frac{(k+1)^2}{8L}.
	 \end{align}
\end{lemma}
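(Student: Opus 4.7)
\medskip

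\noindent\textbf{План доказательства.} Основная идея состоит в том, чтобы из соотношения $A_k = L_k\alpha_k^2$ вместе с $\alpha_k = A_k - A_{k-1}$ извлечь оценку снизу на приращение $\sqrt{A_k} - \sqrt{A_{k-1}}$, а затем просуммировать её по $k$. Роль условия $L_k \leq 2L$ в том, чтобы заменить неизвестные $L_k$ на универсальную константу $2L$ и получить равномерную оценку снизу на шаг.

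\medskip

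\noindent Первым шагом я бы из равенств $A_k = L_k\alpha_k^2$ и $\alpha_k = \sqrt{A_k/L_k} \geq \sqrt{A_k/(2L)}$ (последнее неравенство следует из замечания \ref{remark_maxmin}) получил
$$A_k - A_{k-1} = \alpha_k \geq \sqrt{\frac{A_k}{2L}}.$$
Далее записал бы левую часть в виде
$$A_k - A_{k-1} = \bigl(\sqrt{A_k} - \sqrt{A_{k-1}}\bigr)\bigl(\sqrt{A_k} + \sqrt{A_{k-1}}\bigr)$$
и воспользовался очевидной оценкой $\sqrt{A_k} + \sqrt{A_{k-1}} \leq 2\sqrt{A_k}$, чтобы получить ключевое неравенство
$$\sqrt{A_k} - \sqrt{A_{k-1}} \geq \frac{1}{2\sqrt{2L}}, \quad k \geq 2.$$

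\medskip

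\noindent Второй шаг -- аккуратно разобрать базу $k=1$. Так как $A_0 = \alpha_0 = 0$, из $A_1 = L_1\alpha_1^2 = \alpha_1$ получим $\alpha_1 = 1/L_1$, откуда $A_1 = 1/L_1 \geq 1/(2L)$, т.е. $\sqrt{A_1} \geq 1/\sqrt{2L}$. Это стартовое значение оказывается вдвое больше типичного приращения, что и даёт в итоге $(k+1)^2$ вместо $k^2$.

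\medskip

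\noindent Третий (телескопический) шаг: для произвольного $k \geq 1$
$$\sqrt{A_k} = \sqrt{A_1} + \sum_{i=2}^{k}\bigl(\sqrt{A_i} - \sqrt{A_{i-1}}\bigr) \geq \frac{1}{\sqrt{2L}} + \frac{k-1}{2\sqrt{2L}} = \frac{k+1}{2\sqrt{2L}},$$
откуда возведением в квадрат получается требуемое $A_k \geq (k+1)^2/(8L)$. Основная техническая тонкость, по-видимому, именно в разделении случая $k=1$ от остальных: без учёта дополнительного множителя в начальном значении $A_1$ телескопирование даёт только $k^2/(8L)$, и $(k+1)^2$ в формулировке леммы теряется.
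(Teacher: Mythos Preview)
Твоё доказательство корректно, но отличается по технике от доказательства в статье. В статье авторы явно решают квадратное уравнение $L_{k+1}\alpha_{k+1}^2 - \alpha_{k+1} - A_k = 0$, берут больший корень
\[
\alpha_{k+1} = \frac{1+\sqrt{1+4L_{k+1}A_k}}{2L_{k+1}},
\]
и затем индукцией по $A_k$ показывают, что $\alpha_{k+1}\geq (k+2)/(4L)$, откуда $A_{k+1}\geq (k+2)^2/(8L)$. Ты же обходишься без квадратной формулы и без индукции как таковой: из $\alpha_k=\sqrt{A_k/L_k}\geq\sqrt{A_k/(2L)}$ и разложения $A_k-A_{k-1}=(\sqrt{A_k}-\sqrt{A_{k-1}})(\sqrt{A_k}+\sqrt{A_{k-1}})\leq 2\sqrt{A_k}\,(\sqrt{A_k}-\sqrt{A_{k-1}})$ сразу получаешь равномерную оценку на приращение $\sqrt{A_k}-\sqrt{A_{k-1}}\geq 1/(2\sqrt{2L})$ и телескопируешь. Твой вариант компактнее и прозрачнее показывает, откуда берётся сдвиг $(k+1)^2$ вместо $k^2$ --- именно из того, что стартовое $\sqrt{A_1}\geq 1/\sqrt{2L}$ вдвое больше типичного приращения. Подход статьи, с другой стороны, явно выписывает $\alpha_{k+1}$ и потому ближе к алгоритмическому описанию метода; он также легче обобщается, если захочется более тонко учитывать слагаемое $1/(2L_{k+1})$ в корне.
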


\begin{proof}\renewcommand{\qedsymbol}{}
	Пусть $k = 1$. Таким образом,
	\begin{equation*}
	\alpha_1 = L_{1}\alpha_1^2
	\end{equation*}
	и
	\begin{equation*}
	A_1 = \alpha_1 = \frac{1}{L_1} \geq \frac{1}{2L}.
	\end{equation*}
	Пусть $k \geq 2$, тогда верны следующие эквивалентные равенства:
	\begin{equation*}
	L_{k+1}\alpha^2_{k+1} = A_{k+1}\Leftrightarrow
	\end{equation*}
	\begin{equation*}
	L_{k+1}\alpha^2_{k+1} = A_{k} + \alpha_{k+1}\Leftrightarrow
	\end{equation*}
	\begin{equation*}
	L_{k+1}\alpha^2_{k+1} - \alpha_{k+1} - A_{k} = 0.
	\end{equation*}
	Решая данное квадратное уравнение будем брать наибольший корень:
	\begin{equation*} 
	\alpha_{k+1} = \frac{1 + \sqrt{\uprule 1 + 4L_{k+1}A_{k}}}{2L_{k+1}}.
	\end{equation*}
	По индукции, пусть неравенство (\ref{lemma_maxmin_1_1}) верно для $k$, тогда:
	\begin{gather*}
	\alpha_{k+1} = \frac{1}{2L_{k+1}} + \sqrt{\frac{1}{4L_{k+1}^2} + \frac{A_{k}}{L_{k+1}}} \geq 
	\frac{1}{2L_{k+1}} + \sqrt{\frac{A_{k}}{L_{k+1}}} \geq \\\geq
	\frac{1}{4L} + \frac{1}{\sqrt{2L}}\frac{k+1}{2\sqrt{2L}} =
	\frac{k+2}{4L}.
	\end{gather*}
Последнее неравенство следует из индукционного предположения. В конечном счете получаем, что
	\begin{equation*}
	\alpha_{k+1} \geq \frac{k+2}{4L}
	\end{equation*}
и
	\begin{equation*}
	A_{k+1} = A_k + \alpha_{k+1} = \frac{(k+1)^2}{8L} + \frac{k+2}{4L} \geq \frac{(k+2)^2}{8L}.
	\end{equation*}
\end{proof}

Докажем основную лемму.

\begin{lemma}
\label{lemma_maxmin_3DL}
	Для любого $x \in Q$ выполнено неравенство
	\begin{equation*}
		A_{k+1} F(x_{k+1}) - A_{k} F(x_{k}) + V(x, u_{k+1}) - V(x, u_{k}) \leq \alpha_{k+1}F(x) + 2\delta_k A_{k+1} + \widetilde{\delta}_k.
	\end{equation*}
\end{lemma}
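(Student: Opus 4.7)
The plan is to mirror the proof of Lemma \ref{lemma_maxmin_3DL_G}, adapted to the accelerated scheme by two extra ingredients: a convex-combination splitting that exploits the form of $x_{k+1}$, and the identity $A_{k+1}=L_{k+1}\alpha_{k+1}^2$ which converts the quadratic term coming from the exit condition into a Bregman divergence between consecutive prox-centres $u_k,u_{k+1}$.

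First I would start from the exit condition \eqref{exitLDL} and add the right inequality of \eqref{exitLDLOrig2} to pass from $F_{\delta_k}$ to $F$:
$$F(x_{k+1}) \leq F_{\delta_k}(y_{k+1}) + \psi_{\delta_k}(x_{k+1}, y_{k+1}) + \tfrac{L_{k+1}}{2}\norm{x_{k+1}-y_{k+1}}^2 + 2\delta_k.$$
Multiplying by $A_{k+1}$, I would use the key algebraic identity $x_{k+1}-y_{k+1}=\tfrac{\alpha_{k+1}}{A_{k+1}}(u_{k+1}-u_k)$, obtained by subtracting \eqref{eqymir2DL} from \eqref{eqxmir2DL}; combined with $A_{k+1}=L_{k+1}\alpha_{k+1}^2$ and the strong convexity bound $V(u_{k+1},u_k)\geq\tfrac{1}{2}\norm{u_{k+1}-u_k}^2$, the quadratic term is converted exactly into $V(u_{k+1},u_k)$.

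Next I would apply convexity of $\psi_{\delta_k}(\cdot,y_{k+1})$ to the representation $x_{k+1}=\tfrac{\alpha_{k+1}}{A_{k+1}}u_{k+1}+\tfrac{A_k}{A_{k+1}}x_k$ implied by \eqref{eqxmir2DL}, which gives
$$A_{k+1}\psi_{\delta_k}(x_{k+1},y_{k+1}) \leq \alpha_{k+1}\psi_{\delta_k}(u_{k+1},y_{k+1}) + A_k\psi_{\delta_k}(x_k,y_{k+1}).$$
Using $A_{k+1}=A_k+\alpha_{k+1}$, the right-hand side of the running bound regroups into: (i) a ``past-step'' block $A_k[F_{\delta_k}(y_{k+1})+\psi_{\delta_k}(x_k,y_{k+1})]$, which by the left half of \eqref{exitLDLOrig} is at most $A_k F(x_k)$; and (ii) a ``progress'' block $\alpha_{k+1}[F_{\delta_k}(y_{k+1})+\psi_{\delta_k}(u_{k+1},y_{k+1})]+V(u_{k+1},u_k)$.

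To handle (ii) I would invoke Lemma \ref{lemma_maxmin_2} with $\psi(x)=\alpha_{k+1}\psi_{\delta_k}(x,y_{k+1})$ and $z=u_k$; since \eqref{equmir2DL} defines $u_{k+1}$ as a $\widetilde{\delta}_k$-approximate minimizer in the sense of Definition \ref{solNemirovskiy}, this yields
$$\alpha_{k+1}\psi_{\delta_k}(u_{k+1},y_{k+1})+V(u_{k+1},u_k) \leq \alpha_{k+1}\psi_{\delta_k}(x,y_{k+1})+V(x,u_k)-V(x,u_{k+1})+\widetilde{\delta}_k$$
for an arbitrary $x\in Q$. A second application of the left half of \eqref{exitLDLOrig}, now at the point $x$, bounds $\alpha_{k+1}[F_{\delta_k}(y_{k+1})+\psi_{\delta_k}(x,y_{k+1})]$ by $\alpha_{k+1}F(x)$, and a rearrangement delivers the claimed inequality. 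The main obstacle, rather than any single estimate, will be the careful bookkeeping of the error terms: the slack $2\delta_k$ from the exit condition gets scaled by $A_{k+1}$ because it enters the bound on $F(x_{k+1})$ \emph{before} multiplication, whereas $\widetilde{\delta}_k$ enters only once through Lemma \ref{lemma_maxmin_2}; all other manipulations become routine once the identity $A_{k+1}=L_{k+1}\alpha_{k+1}^2$ is used to absorb the quadratic term into $V(u_{k+1},u_k)$.
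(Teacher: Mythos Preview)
Your proof is correct and follows essentially the same route as the paper: start from the exit condition \eqref{exitLDL} together with \eqref{exitLDLOrig2}, use the identity $x_{k+1}-y_{k+1}=\tfrac{\alpha_{k+1}}{A_{k+1}}(u_{k+1}-u_k)$ and $A_{k+1}=L_{k+1}\alpha_{k+1}^2$ to turn the quadratic term into $V(u_{k+1},u_k)$, split $\psi_{\delta_k}(x_{k+1},y_{k+1})$ by convexity along \eqref{eqxmir2DL}, apply Lemma~\ref{lemma_maxmin_2} to the $u_{k+1}$-part, and close with two applications of the left inequality in \eqref{exitLDLOrig}. The only cosmetic difference is that you multiply by $A_{k+1}$ at the outset, whereas the paper carries the bound on $F(x_{k+1})$ and only multiplies by $A_{k+1}$ at the very end.
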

\begin{proof}\renewcommand{\qedsymbol}{}
	Рассмотрим цепочку неравенств:
	\begin{gather*}
	F(x_{k+1}) \leqarg{(\ref{exitLDL}), (\ref{exitLDLOrig2})} F_{\delta_k}(y_{k+1}) + \psi_{\delta_k}(x_{k+1},y_{k+1})  + \frac{L_{k+1}}{2}\norm{x_{k+1} - y_{k+1}}^2 + 2\delta_k \eqarg{(\ref{eqxmir2DL})} \\=
	F_{\delta_k}(y_{k+1}) + \psi_{\delta_k}(\frac{\alpha_{k+1}u_{k+1} + A_k x_k}{A_{k+1}},y_{k+1}) +\\+ \frac{L_{k+1}}{2}\norm{\frac{\alpha_{k+1}u_{k+1} + A_k x_k}{A_{k+1}} - y_{k+1}}^2 + 2\delta_k\leqarg{Опр. \ref{gen_delta_L_oracle}, (\ref{eqymir2DL})} \\\leq
	F_{\delta_k}(y_{k+1}) + 
	\frac{\alpha_{k+1}}{A_{k+1}}\psi_{\delta_k}(u_{k+1}, y_{k+1}) + \\+
	 \frac{A_k}{A_{k+1}}\psi_{\delta_k}(x_k, y_{k+1})  + \frac{L_{k+1} \alpha^2_{k+1}}{2 A^2_{k+1}}\norm{u_{k+1} - u_k}^2 + 2\delta_k= \\=
	 \frac{A_k}{A_{k+1}}(F_{\delta_k}(y_{k+1}) + \psi_{\delta_k}(x_k, y_{k+1}))
	 + \\+
	 \frac{\alpha_{k+1}}{A_{k+1}}(F_{\delta_k}(y_{k+1}) + 
	 \psi_{\delta_k}(u_{k+1}, y_{k+1}))+ \\
	   + \frac{L_{k+1} \alpha^2_{k+1}}{2 A^2_{k+1}}\norm{u_{k+1} - u_k}^2 + 2\delta_k=_{{\tiny \circled{1}}} \\ =
	 \frac{A_k}{A_{k+1}}(F_{\delta_k}(y_{k+1}) + \psi_{\delta_k}(x_k,y_{k+1}))
	 + \\+
	 \frac{\alpha_{k+1}}{A_{k+1}}(F_{\delta_k}(y_{k+1}) + \psi_{\delta_k}(u_{k+1},y_{k+1})
	 + \frac{1}{2 \alpha_{k+1}}\norm{u_{k+1} - u_k}^2) + 2\delta_k\leq \\\leq
	 \frac{A_k}{A_{k+1}}(F_{\delta_k}(y_{k+1}) + \psi_{\delta_k}(x_k,y_{k+1}))
	 + \\+
	 \frac{\alpha_{k+1}}{A_{k+1}}(F_{\delta_k}(y_{k+1}) + \psi_{\delta_k}(u_{k+1},y_{k+1})
	 + \frac{1}{\alpha_{k+1}}V(u_{k+1}, u_k)) + 2\delta_k\leq_{{\tiny \circled{2}}} \\\leq
	 \frac{A_k}{A_{k+1}} F(x_k) + \\+
	 \frac{\alpha_{k+1}}{A_{k+1}}(F_{\delta_k}(y_{k+1}) + \psi_{\delta_k}(x,y_{k+1})
	 + \frac{1}{\alpha_{k+1}}V(x, u_k) - \frac{1}{\alpha_{k+1}}V(x, u_{k+1}) + \frac{\widetilde{\delta}_k}{\alpha_{k+1}}) + 2\delta_k \leqarg{(\ref{exitLDLOrig})} \\\leq
	 \frac{A_k}{A_{k+1}} F(x_k) + 
	 \frac{\alpha_{k+1}}{A_{k+1}} F(x)
	 + \frac{1}{A_{k+1}}V(x, u_k) - \frac{1}{A_{k+1}}V(x, u_{k+1}) + 2\delta_k + \frac{\widetilde{\delta}_k}{A_{k+1}}
	\end{gather*}
	
	Неравенство {\small \circled{1}} следует из равенства $A_k = L_{k}\alpha^2_k$. Неравенство {\small \circled{2}} следует из леммы \ref{lemma_maxmin_2} с 
	$\psi(x) = \alpha_{k+1} \psi_{\delta_k}(x, y_{k+1})$ и левой части (\ref{exitLDLOrig}).
\end{proof}

\begin{theorem}
	\label{mainTheoremDL}
	Пусть $V(x_*, x_0) \leq R^2$, где $x_0$ -- начальная точка, а $x_*$ -- ближайшая точка минимума к точке $x_0$ в смысле дивергенции Брэгмана. Для предложенного алгоритма выполнено следующее неравенство:
	\begin{equation*}
	F(x_N) - F(x_*) \leq \frac{8LR^2}{(N+1)^2} + \frac{2\sum_{k = 0}^{N-1}\delta_kA_{k+1}}{A_{N}} + \frac{8L\sum_{k=0}^{N-1}\widetilde{\delta}_k}{(N+1)^2}.
	\end{equation*}
\end{theorem}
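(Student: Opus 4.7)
\renewcommand{\qedsymbol}{}
\textbf{План доказательства (набросок).} Моя стратегия состоит в том, чтобы просуммировать неравенство из леммы \ref{lemma_maxmin_3DL} по $k = 0,\ldots,N-1$, воспользоваться телескопированием и затем применить оценку на $A_N$ из леммы \ref{lemma_maxmin_1}.

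Сначала замечу, что левая часть неравенства из леммы \ref{lemma_maxmin_3DL} имеет вид телескопической суммы как по слагаемым $A_{k+1}F(x_{k+1}) - A_k F(x_k)$, так и по $V(x, u_{k+1}) - V(x, u_k)$. После суммирования получаем
\begin{gather*}
A_N F(x_N) - A_0 F(x_0) + V(x, u_N) - V(x, u_0) \leq \Bigl(\sum_{k=0}^{N-1}\alpha_{k+1}\Bigr) F(x) + 2\sum_{k=0}^{N-1}\delta_k A_{k+1} + \sum_{k=0}^{N-1}\widetilde{\delta}_k.
\end{gather*}
По построению $A_0 = \alpha_0 = 0$, а $\sum_{k=0}^{N-1}\alpha_{k+1} = A_N - A_0 = A_N$. Поэтому неравенство упрощается до
\begin{gather*}
A_N F(x_N) + V(x, u_N) - V(x, u_0) \leq A_N F(x) + 2\sum_{k=0}^{N-1}\delta_k A_{k+1} + \sum_{k=0}^{N-1}\widetilde{\delta}_k.
\end{gather*}

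Далее подставлю $x = x_*$, учту, что $u_0 = x_0$ и $V(x_*, x_0) \leq R^2$, а также отброшу неотрицательное слагаемое $V(x_*, u_N) \geq 0$. В результате
\begin{gather*}
A_N (F(x_N) - F(x_*)) \leq R^2 + 2\sum_{k=0}^{N-1}\delta_k A_{k+1} + \sum_{k=0}^{N-1}\widetilde{\delta}_k.
\end{gather*}
Разделю обе части на $A_N$ и применю лемму \ref{lemma_maxmin_1}, дающую $A_N \geq (N+1)^2/(8L)$, т.е. $1/A_N \leq 8L/(N+1)^2$, к первому и третьему слагаемым. Это даёт требуемую оценку
\begin{gather*}
F(x_N) - F(x_*) \leq \frac{8LR^2}{(N+1)^2} + \frac{2\sum_{k=0}^{N-1}\delta_k A_{k+1}}{A_N} + \frac{8L\sum_{k=0}^{N-1}\widetilde{\delta}_k}{(N+1)^2}.
\end{gather*}

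Основные технические моменты уже разобраны в предшествующих леммах: ускоренная скорость обеспечивается ростом $A_N \sim N^2/L$ (лемма \ref{lemma_maxmin_1}), а корректное накопление ошибок $\delta_k$ и $\widetilde{\delta}_k$ — леммой \ref{lemma_maxmin_3DL}. Никаких серьёзных препятствий здесь не ожидается: доказательство сводится к аккуратному суммированию, телескопированию и подстановке оценки $A_N$; единственная тонкость — не забыть, что коэффициент при $\delta_k$ содержит именно $A_{k+1}$, а не $\alpha_{k+1}$, что делает второе слагаемое в ответе неоднородным по отношению к двум другим.
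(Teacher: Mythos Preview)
Your proof is correct and follows essentially the same approach as the paper: telescoping the inequality from Lemma~\ref{lemma_maxmin_3DL}, substituting $x=x_*$, dropping $V(x_*,u_N)\ge 0$, and applying the bound $A_N\ge (N+1)^2/(8L)$ from Lemma~\ref{lemma_maxmin_1}. Your write-up is in fact slightly more explicit than the paper's (e.g., noting $u_0=x_0$ and $\sum_{k=0}^{N-1}\alpha_{k+1}=A_N$), so there is nothing to add.
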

\begin{proof}\renewcommand{\qedsymbol}{}
	Суммируя неравенство из леммы \ref{lemma_maxmin_3DL} по $k = 0, ..., N - 1$, получим
	\begin{gather*}
		A_{N} F(x_N) - A_{0} F(x_0) + V(x, u_N) - V(x, u_0) \leq (A_N - A_0)F(x) + 2\sum_{k = 0}^{N-1}\delta_kA_{k+1} + \sum_{k=0}^{N-1}\widetilde{\delta}_k \Leftrightarrow\\
		A_{N} F(x_N) + V(x, u_N) - V(x, u_0) \leq A_NF(x) + 2\sum_{k = 0}^{N-1}\delta_kA_{k+1} + \sum_{k=0}^{N-1}\widetilde{\delta}_k.
	\end{gather*}
	Если взять $x = x_*$, то будет верно неравенство
	\begin{gather*}
		A_{N} (F(x_N) - F_*) \leq R^2 + 2\sum_{k = 0}^{N-1}\delta_kA_{k+1} + \sum_{k=0}^{N-1}\widetilde{\delta}_k
	\end{gather*}
	Разделим обе части неравенства на $A_N$ и в конечном счете получим, что
	\begin{gather*}	
		F(x_N) - F_* \leq \frac{R^2}{A_{N}} + \frac{2\sum_{k = 0}^{N-1}\delta_kA_{k+1}}{A_{N}} + \frac{\sum_{k=0}^{N-1}\widetilde{\delta}_k}{A_{N}} \leq_{{\tiny \circled{1}}}\\\leq \frac{8LR^2}{(N+1)^2} + \frac{2\sum_{k = 0}^{N-1}\delta_kA_{k+1}}{A_{N}} + \frac{8L\sum_{k=0}^{N-1}\widetilde{\delta}_k}{(N+1)^2}.
	\end{gather*}
		
	Неравенство {\small \circled{1}} следует из леммы \ref{lemma_maxmin_1}.
	
\end{proof}

\section{Следствия}
\label{sledviya}
\subsection{\it Быстрый градиентный метод}
Предположим, что $F(x)$ -- гладкая выпуклая функция с $L$-липшицевым градиентом в норме $\norm{}$, тогда (см. \cite{devolder2014first})
\begin{gather}
0 \leq F(x) - F(y) - \langle\nabla F(y), x - y \rangle \leq \frac{L}{2}\norm{x - y}^2  \,\,\,\, \forall x,y \in Q.
\end{gather}
Таким образом, получаем, что $\psi_{\delta_k}(x,y) = \langle\nabla F(y), x - y \rangle$, $F_{\delta_k}(y) = F(y)$ и $\delta_k = 0 ,\, \forall k$. Помимо этого будем предполагать, что промежуточную задачу мы можем решать точно, поэтому $\widetilde{\delta}_k = 0$ для любого $k$. Получаем следующую скорость сходимости для быстрого варианта метода (разд. \ref{fastGradMethod} и теорема \ref{mainTheoremDL}):

\begin{equation*}
F(x_N) - F(x_*) \leq \frac{8LR^2}{(N+1)^2}.
\end{equation*}

Данная скорость сходимости является оптимальной с точностью до числового множителя (лучше константы 2 получить нельзя \cite{taylor2015exact}, в то время, как у нас константа 8).

\subsection{\it Сравнение градиентного и быстрого градиентного метода}
Предположим, что у нас для задачи (\ref{mainTask3}) имеется $(\delta, L)$-оракул из \cite{devolder2014first} и будем считать, что промежуточную задачу в смысле определения \ref{solNemirovskiy} мы можем решать на каждом шаге с точностью $\widetilde{\delta}$, тогда (теоремы \ref{mainTheoremDL_G} и \ref{mainTheoremDL}) верны следующие неравенства:
\begin{gather*}
F(\bar{x}_N) - F(x_*) \leq \frac{2LR^2}{N} + 2L\widetilde{\delta} + 2\delta,\\
F(x_N) - F(x_*) \leq \frac{8LR^2}{(N+1)^2}  + \frac{8L\widetilde{\delta}}{N+1} + 2N\delta ,
\end{gather*}
где $\bar{x}_N$ -- точка из теоремы \ref{mainTheoremDL_G}, а $x_N$ -- точка из теоремы \ref{mainTheoremDL}. Мы получаем, что быстрый вариант более устойчив к ошибкам $\widetilde{\delta}$, возникающим при решении промежуточных задач, но при этом он накапливает ошибки $\delta$, которые возникают при вызове $(\delta, L)$-оракула. Стоит отметить, что существует промежуточный градиентный метод \cite{devolder2013intermediate} (для стохастического случая \cite{dvurechensky2016stochastic}), для которого можно получить оценку вида \begin{gather*}\mathcal{O}(1)\frac{LR^2}{N^p} + O(1)N^{1-p}\widetilde{\delta} + \mathcal{O}(1)N^{p-1}\delta,\end{gather*} где $p \in [1,2]$ можно выбирать произвольным образом и за счет этого уменьшать непрерывно влияние шума, но при этом ухудшать оценку скорости сходимости.

\subsection{\it Универсальный метод}

Рассмотрим быструю версию градиентного метода (разд. \ref{fastGradMethod} и теорема \ref{mainTheoremDL}). 

Универсальный метод \cite{nesterov2015universal} позволяет применять концепцию $(\delta,L)$-оракула \cite{devolder2014first},\cite{nesterov2015universal} для решения негладких задач. Будем предполагать, что выполняется условие Гёльдера: существует $\nu\in[0,1]$ такое, что
\begin{equation*}
\norm{\nabla F(x) - \nabla F(y)}_* \leq L_\nu\norm{x - y}^\nu\,\,\,\,\forall x,y \in Q.
\end{equation*}
Тогда (см. \cite{nesterov2015universal})
\begin{gather}
0 \leq F(x) - F(y) - \langle\nabla F(y), x - y \rangle \leq \frac{L(\delta)}{2}\norm{x - y}^2 + \delta \,\,\,\, \forall x,y \in Q,
\end{gather}
где \begin{gather*}L(\delta)=L_\nu\left[\frac{L_\nu}{2\delta}\frac{1-\nu}{1+\nu}\right]^\frac{1-\nu}{1+\nu}\end{gather*} и $\delta > 0$ -- свободный параметр. Получаем, что $\psi_{\delta_k}(x,y) = \langle\nabla F(y), x - y \rangle$, $F_{\delta_k}(y) = F(y)$. Будем предполагать, что промежуточную задачу мы можем решать точно, таким образом, $\widetilde{\delta}_k = 0$ для любого $
k$. Возьмем \begin{gather}\delta_k = \epsilon \frac{\alpha_{k+1}}{4A_{k+1}} \,\,\,\,\forall k,\label{delta_k_univ}\end{gather} где $\epsilon$ -- необходимая точность решения по функции.

\change{Из теоремы \ref{mainTheoremDL} с нашими предположениями мы имеем следующую скорость сходимости:
	\begin{equation}
	f(x_N) - f(x_*) \leq \frac{R^2}{A_N} + \frac{\epsilon}{2}.
	\end{equation} 
	Как и в статье \cite{nesterov2015universal} мы можем показать, что верно следущее неравенство:
$$A_N \geq \frac{N^\frac{1+3\nu}{1+\nu}\epsilon^\frac{1-\nu}{1+\nu}}{2^\frac{2+4\nu}{1+\nu}L_\nu^\frac{2}{1+\nu}}.$$
Отсюда получаем, что
$$N \leq \inf_{\nu\in[0,1]}\left[2^\frac{3+5\nu}{1+3\nu}\left(\frac{L_\nu R^{1+\nu}}{\epsilon}\right)^\frac{2}{1+3\nu}\right].$$}

Данная оценка является оптимальной с точностью до числового множителя \cite{guzman2015lower}.

\subsection{\it Метод условного градиента}

На практике часто вспомогательная задача (\ref{equmir2DL}) не может быть решена за разумное время \cite{ben-tal2015lectures}, \cite{nesterov2015complexity}. В работе \cite{jaggi2013revisiting} показывается, что  метод условного градиента (Франк--Вульфа) \cite{ben-tal2015lectures}, \cite{jaggi2013revisiting}, \cite{harchaoui2015conditional}  может быть очень эффективен для определенного класса задач. Поэтому вместо $\phi_{k+1}(x) = V(x, u_k) + \alpha_{k+1}\psi_{\delta_k}(x, y_{k+1})$ в (\ref{equmir2DL}) используют $\widetilde{\phi}_{k+1}(x) = \alpha_{k+1}\psi_{\delta_k}(x, y_{k+1})$. Рассмотрим данную замену с точки зрения ошибки $\widetilde{\delta}_k$. Далее будем предполагать, что $F(x)$ -- гладкая выпуклая функция с $L$-липшицевым градиентом в норме $\norm{}$ и $V(x,y) \leq R_Q^2$ для любого $x,y \in Q$. И пусть $u_{k+1} = \left(\argmin^{\widetilde{\delta}_k}_{x \in Q}\phi_{k+1}(x) \myeq \argmin_{x \in Q} \widetilde{\phi}_{k+1}(x)\right)$, тогда
\begin{gather*}
\exists h \in \partial\phi_{k+1}(u_{k+1}), \exists g \in \partial\widetilde{\phi}_{k+1}(u_{k+1}),\,\,\, \langle h, x - u_{k+1}  \rangle =\\= \langle g, x - u_{k+1}  \rangle + \langle \nabla_{u_{k+1}} V(u_{k+1}, u_k), x - u_{k+1}\rangle \geq \langle \nabla_{u_{k+1}} V(u_{k+1}, u_k), x - u_{k+1}\rangle = \\ =
-V(u_{k+1},u_{k}) - V(x,u_{k+1}) + V(x,u_{k}) \geq -2R^2_Q.
\end{gather*}

В алгоритме будем предполагать, что $\widetilde{\delta}_k = 2R^2_Q$ для любого $k$. Остальное аналогично гладкому случаю с $L$-липшицевым градиентом в норме $\norm{}$. Тогда получаем следующую скорость сходимости для быстрого варианта метода (разд. \ref{fastGradMethod} и теорема \ref{mainTheoremDL}):
\begin{equation*}
F(x_N) - F(x_*) \leq \frac{8LR^2}{(N+1)^2} + \frac{16LR^2_Q}{N+1}.
\end{equation*}

Данная оценка с точностью до числового множителя не может быть улучшена для приведенного метода \cite{polyak1983introduction}.

\subsection{\it Композитная оптимизация}
Рассмотрим задачу композитной оптимизации \cite{nesterov2013gradient}:
\begin{align}
\label{composite}
F(x) \myeq f(x) + h(x) \rightarrow \min_{x \in Q},
\end{align}
где $f(x)$ -- гладкая выпуклая функция с $L$-липшицевым градиентом в норме $\norm{}$ и $h(x)$ -- выпуклая функция (в общем случае негладкая). Для данной задачи верно следующее неравенство
\begin{gather}
0 \leq F(x) - F(y) - \langle\nabla f(y), x - y \rangle - h(x) + h(y) \leq \frac{L}{2}\norm{x - y}^2  \,\,\,\, \forall x,y \in Q.
\end{gather}
Таким образом, мы можем взять $\psi_{\delta_k}(x,y) = \langle\nabla f(y), x - y \rangle + h(x) - h(y)$, $F_{\delta_k}(y) = F(y)$ и $\delta_k = 0$ для любого $k$. Получается, что для данной задачи обычный и ускоренный варианты метода будут работать без изменений. Стоит отметить, что часть сложности задачи мы таким образом переносим в (\ref{equmir2DL_G}) или (\ref{equmir2DL}). Если в гладком случае в вспомогательной задаче стоит функция вида $V(x, u_k) + \alpha_{k+1}\langle\nabla f(y_{k+1}), x - y_{k+1} \rangle$, то в данной задаче (\ref{composite}) добавляется слагаемое $h(x)$, и в конечном счете нужно решать более сложную задачу на каждом шаге вида $V(x, u_k) + \alpha_{k+1}\left(\langle\nabla f(y_{k+1}), x - y_{k+1} \rangle + h(x) - h(y_{k+1})\right)$.

\subsection{\it Прокс-метод}
Рассмотрим задачу:
\begin{align}
\label{main_prox_method}
F(x) \rightarrow \min_{x \in Q},
\end{align}
где $F(x)$ -- в общем случае негладкая выпуклая функция. Можем в описанном выше подходе выбрать $\psi_{\delta_k}(x,y) = F(x) - F(y)$, $F_{\delta_k}(y) = F(y)$ и $\delta_k = 0$ для любого $k$. Условие (\ref{exitLDLOrig}) будет выполнятся при любом выборе $L \geq 0$. Методы из разд. \ref{gradMethod} и разд. \ref{fastGradMethod} являются, вообще говоря, адаптивными в том смысле, что "локальная"\,константа Липшица градиента $L_k$ подбирается во время работы метода. Зафиксируем произвольную константу $L \geq 0$, возьмем все $L_k$ равными $L$, не подбирая их во внутренним цикле. При этом довольно легко показать, что метод и все оценки останутся без изменений. Тогда промежуточный шаг для метода из разд. \ref{gradMethod} будет выглядеть следующим образом:
\begin{align}
x_{k+1} := {\argmin_{x \in Q}}^{\widetilde{\delta}_k}\left[LV(x, x_k) + F(x)\right].
\label{prox_help}
\end{align}
Данный метод называется проксимальным методом \cite{polyak1983introduction}, \cite{parikh2014proximal}, он может быть эффективным в ряде задач \cite{lin2015universal}. Для негладкой функции алгоритм из разд. \ref{fastGradMethod} сходится по оценкам теоремы \ref{gradMethod}, что противоречит нижним оценкам для негладких функций \cite{nesterov2010introductory}, \cite{nemirovskiy1979slognost}. Однако задача (\ref{prox_help}), вообще говоря, может быть решена только приближено \cite{rakhlin2012making}, \cite{juditsky2011first}. Более детальный анализ показывает \cite{gasnikov2017universal}, что общее число обращений к оракулу за субградиентом функции $f(x)$ не противоречит нижним оценкам \cite{nesterov2010introductory}, \cite{nemirovskiy1979slognost} для негладких задач, и более того, соответствует, им.

\subsection{\it Суперпозиция функций}
Рассмотрим следующую задачу (см. \cite{nemirovski1995information}, \cite{lan2015bundle},\cite{nemirovskiy1985optimalnie}):
\begin{align}
F(x) \myeq f(f_1(x), \dots, f_m(x)) \rightarrow \min_{x \in Q},
\end{align}
где $f_k(x)$ -- гладкая выпуклая функция с $L_k$-липшицевым градиентом в норме $\norm{}$ для любого $k$. Функция $f(x)$ является $M$-липшицевой выпуклой функцией относительно $L_1$-нормы, неубывающей по каждому из своих аргументов. Как следствие (см. \cite{boyd2004convex}, \cite{lan2015bundle}), функция $F(x)$ так же является выпуклой функцией и выполняется следующее неравенство \cite{lan2015bundle}:
\begin{gather*}
0 \leq F(x) - f(f_1(y) + \langle\nabla f_1(y), x - y \rangle, \dots, f_m(y)+\langle\nabla f_m(y), x - y \rangle) \leq\\\leq M\frac{\sum_{i=1}^{m}L_i}{2}\norm{x - y}^2  \,\,\,\, \forall x,y \in Q.
\end{gather*}
И верно
\begin{gather*}
0 \leq F(x) - F(y) - f(f_1(y) + \langle\nabla f_1(y), x - y \rangle, \dots, f_m(y)+\langle\nabla f_m(y), x - y \rangle) + F(y) \leq\\\leq M\frac{\sum_{i=1}^{m}L_i}{2}\norm{x - y}^2  \,\,\,\, \forall x,y \in Q.
\end{gather*}
Мы можем взять $\psi_{\delta_k}(x,y) = f(f_1(y) + \langle\nabla f_1(y), x - y \rangle, \dots, f_m(y)+\langle\nabla f_m(y), x - y \rangle) - F(y)$, $F_{\delta_k}(y) = F(y)$ и $\delta_k = 0$ для любого $k$. Как и в задаче (\ref{composite}) оценки на скорость сходимости сохраняются, но при этом вспомогательные задачи (\ref{equmir2DL_G}) и (\ref{equmir2DL}) могут сильно усложниться. Данная задача может включать обширное количество частных случаев \cite{lan2015bundle}, \cite{nemirovski1995information}: гладкая оптимизация, негладкая оптимизация, минимаксная задача \cite{nesterov2010introductory}, композитная оптимизация, задача с регуляризацией.

\subsection{\it Дополнительные примеры}
Рассмотрим без подробного объяснения дополнительные примеры постановок задач, в которых может быть актуальной концепция модели функции из разд. \ref{gradMethod}.
\begin{enumerate}
\item Рассматривается следующая минмин задача \cite{gasnikoveffectivnie}:
\begin{align}
f(x) \myeq \min_{y \in Q}F(y,x) \rightarrow \min_{x \in \mathds{R}^n}.
\label{main_minmin}
\end{align}
Пусть $F(y,x)$ -- гладкая и выполнено условие:
\begin{gather*}
\norm{\nabla F(y',x') -\nabla F(y,x)}_2 \leq L \norm{(y',x') -(y,x)}_2 \,\,\,\, \forall y,y' \in Q, \,\,\,\,\forall x,x' \in \mathds{R}^n.
\end{gather*}
Тогда (из \cite{gasnikov2016stochatichiskie}), если можно найти такую $\widetilde{y}_\delta(x) \in Q$, что верно неравенство
\begin{gather*}
\langle\nabla_yF(\widetilde{y}_\delta(x), x), y - \widetilde{y}_\delta(x)\rangle \geq -\delta \, \,\,\, \forall y \in Q,
\end{gather*}
то 
\begin{gather*}
F(\widetilde{y}_\delta(x), x) - f(x) \leq \delta, \norm{\nabla f(x') -\nabla f(x)}_2 \leq L \norm{x' -x}_2,
\end{gather*}
и
\begin{gather*}
\left(F_{\delta}(x)= F(\widetilde{y}_\delta(x), x) - 2\delta, \psi_{\delta}(z,x) = \langle\nabla_yF(\widetilde{y}_\delta(x), x), z - x\rangle\right)
\end{gather*}
будет $(6\delta, 2L)$-моделью для функции $f(x)$ в точке $x$.

Таким образом, мы получаем $(6\delta, 2L)$-модель, которая может быть использована для решения (\ref{main_minmin}).
\item Рассматривается следующая задача поиска седловой точки \cite{gasnikoveffectivnie}:
\begin{align}
f(x) \myeq \max_{y \in Q}\left[\langle x, b - Ay\rangle - \phi(y)\right] \rightarrow \min_{x \in \mathds{R}^n},
\label{main_saddle}
\end{align}
где $\phi(y)$ является $\mu$-сильно выпуклой относительно $p$-нормы, $1\leq p\leq2$. Тогда из \cite{devolder2014first} функция $f(x)$ -- гладкая функция с константой Липшица градиента в 2-норме
\begin{gather*}
L = \frac{1}{\mu}\max_{\norm{y}_p\leq1}\norm{Ay}_2^2.
\end{gather*}
Если $y_\delta(x)$ -- решение вспомогательной задачи максимизации с точностью по функции $\delta$, то пара
\begin{gather*}
\left(F_{\delta}(x)=\langle x, b - Ay_\delta(x)\rangle - \phi(y_\delta(x)), \psi_{\delta}(z,x) =\langle b - Ay_\delta(x), z - x\rangle\right)
\end{gather*}
будет $(\delta, 2L)$-моделью для функции $f(x)$ в точке $x$.

\item Рассматривается следующая функция (следует сравнить c (\ref{prox_help})):
\begin{align}
f(x) \myeq \min_{y \in Q}\underbrace{\left\{\phi(y) + \frac{L}{2}\norm{y - x}^2_2\right\}}_{\Lambda(x,y)}.
\label{prox_ex}
\end{align}
Пусть $\phi(y)$ -- выпуклая функция и 
\begin{gather*}
\max_{y \in Q}\left\{\Lambda(x,y(x)) - \Lambda(x,y) + \frac{L}{2}\norm{y - y(x)}^2_2\right\} \leq \delta
\end{gather*}
Тогда из \cite{devolder2014first} верно, что
\begin{gather*}
\left(F_{\delta}(x)=\phi(y(x)) + \frac{L}{2}\norm{y(x) - x}^2_2 - \delta,\psi_{\delta}(z,x) =\langle L(x - y(x)), z - x\rangle\right)
\end{gather*}
будет $(\delta, L)$-моделью для функции $f(x)$ в точке $x$.
\end{enumerate}

\section{Заключение}
В данной работе представлен градиентный и быстрый градиентный метод для $(\delta,L)$-модели. В работе приведены алгоритмы методов и оценки скоростей сходимости. В разд. \ref{sledviya} показано, что данные методы могут быть довольно сильным инструментом для решения большого класса задач. Стоит отметить, что список задач из разд. \ref{sledviya}, который покрывает данная работа, не исчерпывает полный потенциал концепции. Мы верим, что данный подход может быть применим во многих других проблемах, включая стохастическую, покомпонентную, безградиентную оптимизацию \cite{tyurin2017mirror}. Также можно показать, что предложенные в статье алгоритмы являются прямо-двойственными \cite{anikin2017dual}. Детали планируется изложить в будущих исследованиях.

Хотим поблагодарить Павла Двуреченского за указание на ряд литературных источников.

\printbibliography

\begin{center}
	\Large \it Приложение
\end{center}

В данной работе ключевым образом использовался тот факт, что мы решаем вспомогательную задачу с точностью $\widetilde{\delta}$, используя концепцию из определения \ref{solNemirovskiy}. Было показано, что из $\widetilde{\delta}$-решения в смысле определения \ref{solNemirovskiy} следует $\widetilde{\delta}$-решение по функции. Обратное в общем случае не всегда верно, но мы попробуем представить довольно общие примеры, когда это будет выполнимо. Тривиальный случай, когда $\widetilde{\delta} = 0$. Тогда из критерия оптимальности первого порядка будет следовать, что данные два определения $\widetilde{\delta}$-решения будут эквивалентны.

Предположим, что решается следующая задача:
\begin{align}
\label{helpTask}
\alpha_k\psi(x) + V(x,x_k) \rightarrow \min_{x \in Q},
\end{align}
где $\psi(x)$ -- выпуклая функция и $V(x,x_k)$ -- сильно-выпуклая функция с константой сильной выпуклости равной 1. Вспомогательная задача в итерациях методов оптимизации довольно часто имеет такой вид. Конечно, существует случаи, когда данную задачу можно решить аналитически, например, когда в основной задаче решается гладкая задача оптимизации без ограничений с евклидовой прокс-структурой ($V(x,y) = \frac{1}{2}\norm{x-y}_2^2$). В случаях, когда задача (\ref{helpTask}) может быть решена только численно, можно использовать различные подходы в зависимости от задачи.

Рассмотрим случай, когда $$\psi(x) + V(x,x_k) = \sum_{i=1}^{n}\left[\psi_i(x_i) + V_i(x_i)\right].$$ При данном условии задача в (\ref{helpTask}) является сепарабельной. Получается, что достаточно решить $n$ одномерных задач, каждую из которых можно решать методом деления отрезка пополам \cite{vasiliev2017methods} за время $\mathcal{O}(\ln\left(\frac{1}{\epsilon}\right))$, где $\epsilon$ -- точность по функции.

Можно использовать два подхода, если дополнительно предположить, что $\psi(x)$ с $L$-липшицевым градиентом в норме $\norm{}$. Если $V(x,x_k)$ с $L$-липшицевым градиентом в норме $\norm{}$, то задачу можно решать за линейное время $\mathcal{O}(\ln\left(\frac{1}{\epsilon}\right))$ \cite{nesterov2010introductory}. Если $V(x,x_k)$ не имеет $L$-липшицевого градиента в норме $\norm{}$, можно в задаче (\ref{helpTask}) смотреть на $V(x,x_k)$, как на композит. При этом, чтобы получить линейную скорость сходимости, можно воспользоваться техникой рестартов \cite{dvurechensky2016stochastic},\cite{juditsky2014deterministic}.

\end{document}